\documentclass[12pt, fleqn]{article}
\usepackage[cp1251]{inputenc}
\usepackage{latexsym,amsfonts,amssymb}
\usepackage{graphicx}

\usepackage{amsbsy}
\usepackage{amsmath}
\usepackage{epsf}
\usepackage{cite}
\usepackage{hyperref,color}

\newtheorem{theo}{Theorem}
\newtheorem{remark}{Remark}

\newcommand{\bt}{\begin{theo}}
\newcommand{\et}{\end{theo}}
\newcommand{\bd}{\begin{displaymath}}
\newcommand{\ed}{\end{displaymath}}

\newcommand{\be} {\begin{equation}}
\newcommand{\ee} {\end{equation}}
\newcommand{\ba}{\begin{array}}
\newcommand{\ea} {\end{array}}
\newcommand{\bea}{\begin{eqnarray}}
\newcommand{\eea} {\end{eqnarray}}

\newcommand{\p} {\partial}

\newcommand{\lbd} {\lambda}

\begin{document}

\begin{center}
{\Large \bf
 Multidimensional parabolic equations \\ with a logarithmic nonlinearity \\
 admitting  exceptional Lie symmetries  }

\medskip

{\bf Roman Cherniha $^{a,b,}$\footnote{\small  Corresponding author.
E-mail: r.m.cherniha@gmail.com} and John R. King$^{a}$ }

 $^{a}$ \quad School of Mathematical Sciences, University of Nottingham,\\
  University Park, Nottingham NG7 2RD, UK \\
$^{b}$ \quad National University of Kyiv-Mohyla Academy,\\
2, Skovoroda Street, Kyiv  04070, Ukraine.\\

\end{center}

\begin{abstract}
Nonlinear reaction-diffusion equations arise in a very wide range of applications. Here we address
 a specific multidimensional class in which the source term is non-autonomous in
 time,
 this choice being motivated  both  by its relevance to a broad class of (autonomous) `near-linear'
  reaction-diffusion equations and by the unusually rich nature of the Lie symmetries
  that results (including for arbitrary time-dependence, and with four even more exceptional
  cases arising).  The class of non-autonomous equation analysed   is derived by
  an  asymptotic  approach, with the Lie symmetry classification and a further analysis subsequently
    carried  out.
     Both Lie and non-Lie reductions to ordinary differential equations
are also  noted and relevant exact solutions are constructed.
\end{abstract}

\textbf{Keywords:}   nonlinear evolution equation; diffusive Gompertz equation; Lie symmetry; Lie algebra; exact solution.

\section{\bf  Introduction} \label{sec:1}

In this work, we study the following class of   evolutionary
multidimensional equations with the logarithmic nonlinearity
\begin{equation}\label{1-1} u_{t}=d(t)\nabla^2 u -f(t)u\ln u , \end{equation}
where $u(t,\bar{x})$ is  unknown function of $n+1$ variables,
$\bar{x}=(x_1,\dots,x_n)$, \  $\nabla=(\frac{\p}{\p x_1},\dots, \frac{\p}{\p x_n})$
% the dot denotes the  scalar product
 and $f(t)$ and $d(t)$ are arbitrary smooth non-zero functions.

First of all, it can be noted that  the diffusivity $d(t)$ is
reducible to $d(t)=1$ without losing  generality by the
transformation
\begin{equation}\label{1-2} t'= \int d(t)dt\equiv D(t), \ f'(t')=\frac{f(D^{-1}(t'))}{d(D^{-1}(t'))}. \end{equation}
So, omitting the primes, in what follows  we consider  the class of
 semilinear reaction-diffusion (RD) equations
\begin{equation}\label{1-0} u_{t}=\nabla^2 u -f(t)u\ln u  \end{equation}
with a single arbitrary function $f(t)\not=0.$
Although the nonlinearity in the RD equation (\ref{1-0}) is specified,  it will be demonstrated below that the logarithmic
 nonlinearity plays a generic role for a specific class of autonomous RD equations.
% with sufficiently general form of sources/sinks.
Here we only note that this nonlinearity naturally arises in population and  cancer modelling and the corresponding
ordinary differential equation
\[U'(t)=-f(t)U\ln U, \]
usually with $f(t)=1$, is known as the Gompertz equation (hereafter
primes denote differentiation w.r.t. $t$). Notably, B. Gompertz
\cite{gompertz-1825} introduced a function related to the above ODE,
while much later
 both that  function and the ODE were applied
  in  cancer modelling \cite{laird-1964}, \cite{kuang-et-al-16}(see Chapter 2 and references
   cited therein) and
  some other biomedical processes \cite{murray-1986},  \cite{murray-03}
   (see Chapter 4 and references cited therein).

Nowadays,   symmetry-based methods  are very common and  powerful
methods for investigation of  nonlinear PDEs, in particular  for the
construction of exact solutions.  Although there are several related
methods (see the well-known book \cite{bl-anco-10} for details), the
classical Lie method
  still remains   the most popular.
There are thousands of published papers devoted to the application
of this method  to PDEs; therefore, we list only some
recent monographs, namely \cite{bl-anco-10, arrigo-15,
ch-se-pl-2018}.  One of the most challenging problems of the Lie method
% arise when there is the
is  performing the   {\it  Lie symmetry classification} (another
terminology is the Lie group classification) of a given nonlinear
PDE. Such problems  occur when the  given  PDE  involves quantities/parameters
(with physical, biological, chemical etc. meanings) that  are
arbitrary functions. It should be stressed that  typically the  Lie
symmetry classification (LSC) is a highly non-trivial problem (see
Chapter 2 in \cite{ch-se-pl-2018} and references cited therein).
Especially, this problem is difficult for    multidimensional
PDEs  because the  results of the LSC can depend on the dimensionality of the space
of independent variables. Typical examples are nonlinear
reaction-diffusion equations \cite{dor-svir-1983} and those
involving convection terms \cite{ch-se-98,ch-se-pr-21}.

The remaining sections are  organized as follows. In
Section~\ref{sec:2}, it is shown how a general RD equation with a
source/sink  involving a specific class of  arbitrary functions can
be  reduced to  (\ref{1-0}) using a reasonable assumption  and a
standard technique of
% approximation
asymptotic  theory.  In Section~\ref{sec:3},
 the LSC problem   is solved for the class
of  RD equations  (\ref{1-0}). It is shown that there are exactly
four forms of the function $f(t)\not=0$  leading to highly
non-trivial Lie symmetries. While the case $f(t)=\mathtt{constant}$
is well-known \cite{dor-svir-1983}, the other three cases are,  to
the best of our knowledge, identified for the first time. In
Section~\ref{sec:4}, some analysis of the Lie algebras obtained is
presented in the case $n=1$ and inequivalent reductions, including
non-Lie ones, of the RD equations of the form (\ref{1-0}) to
ordinary differential equations are constructed. Exact solutions are
derived as well. Finally, we discuss the results obtained and
present some conclusions  in the last section.

\section{\bf Why is the Gompertz  nonlinearity of more general relevance? } \label{sec:2}

Semilinear reaction-diffusion (RD) equations of the form
\begin{equation}\label{0-1} \phi_{t}=\nabla^2 \phi +S(\phi) \end{equation}
have, of course, an enormous number of applications.
Here $\phi(t,\bar{x})$ is  unknown function of $n+1$ variables and  $S$ is an arbitrary function.
% $\bar{x}=(x_1,\dots,x_n)$, \  $\nabla=(\frac{\p}{\p x_1},\dots, \frac{\p}{\p x_n})$.
As noted above, the Gompertz  nonlinearity
\[  S(\phi)=-\phi\ln\phi \]
is of specific interest in biological applications, and is also of mathematical significance, with regard both
to the associated Lie symmetry and, more  importantly in the current context, with its near-borderline status in terms of  the Osgood criterion, which was introduced in the seminal work \cite{osgood} . The latter property provides a key motivation for the current study. Thus, we start from the choice
\begin{equation}\label{0-2}  S(\phi)=\frac{\phi}{\delta}F(-\delta\ln\phi) \end{equation}
in which the otherwise arbitrary function $F(\Psi)$ depends sub-exponentially on $\Psi$ and where $\delta$ (with $0<\delta\ll 1$) is introduced to analyse the evolution from the small amplitude initial data
\[ t=0: \ \phi = e^{-1/\delta}u_0(\bar{x}), \]
where the specified
% given ???????????
 function $u_0(x)$ does not depend on $\delta$.
 The asymptotic ansatz appropriate to the initial data then takes the form
 \[ \phi \sim  e^{-\sigma(t)/\delta}u(t,x) \]
 for $t=O(1)$.
 So,  formula (\ref{0-2}) can be rewritten as
 \[ S(\phi) \sim \frac{\phi}{\delta} F(\sigma(t)-\delta \ln u)=\frac{\phi}{\delta}\Big(F(\sigma)-\delta F'(\sigma)\ln u+o(\delta)\Big). \]
 As a result, after simple calculations,  the RD equation (\ref{0-1}) is reducible to the system
  \begin{equation}\label{0-4} \ba{l}
  \frac{d\sigma}{dt}= - F(\sigma),\\
  \frac{\p u}{\p t}=\nabla^2 u - F'(\sigma)u\ln u, \ea \end{equation}
  subject to  the initial data
  \[ t=0: \ \sigma=1, \ u= u_0(\bar{x}). \]
  Now one realizes that the Gompertz nonlinearity attains
  %obtains
   more generic status\footnote{\small  It is important to stress that account must be taken
    of the full nonlinearity for subsequent stages of the evolution,
    but we shall not concern ourselves with these here.
    See \cite{fa-ki-26} for related considerations}.
  This asymptotic limit therefore motivates our study below of the non-autonomous nonlinear RD equation (\ref{1-0}) with $f(t)=F'(\sigma)$.

 To illustrate the nature of this set up, it suffices to consider two special cases:\\
 \textbf{(i)} $F(\sigma)=\sigma^k$.
 In this case, the first of (\ref{0-4}) implies  $\sigma = \Big(1+(k-1)t\Big)^{\frac{1}{1-k}}$ and hence
 \[ f(t)=
 \frac{k}{1+(k-1)t}. \]
  \textbf{(ii)} $F(\sigma)=\sigma (1+\ln \sigma)^k$.
  In this case   $\sigma = \exp\Big(\big(1+(k-1)t\big)^{\frac{1}{1-k}}-1\Big)$  and hence
  \[f(t)= \Big(1+(k-1)t\Big)^{\frac{k}{1-k}}+\frac{k}{1+(k-1)t}. \]
  In each case $k=1$ plays a borderline role, as is to be expected from the Osgood criterion, and
  accessing arbitrary $f(t)$ requires allowing arbitrarily close approaches to Osgood criticality,
   i.e. broad $f(t)$ correspond to narrow $F(\sigma)$.

\section{\bf  The main result} \label{sec:3}

Here our aim is  solving {\it the Lie symmetry classification
problem}, i.e. to identify  all possible forms of the function
$f(t)$ other than $f(t)\equiv 0$  leading to extensions of the
principal algebra. The latter is the Lie algebra that  consists of
all possible Lie symmetries of  equations of the form (\ref{1-0})
independently of the function $f(t)$.

\begin{theo} \label{th1}
Equation (\ref{1-0}) with an arbitrary function $f(t)$ is invariant
under the  principal  algebra, which is the
$(1+\frac{1}{2}n(n+3))$-dimensional Lie algebra generated by the Lie
symmetry operators
\begin{equation}\label{2-1}\ba{l}
\medskip
P_a= \partial_{x_a}\equiv \frac{\partial}{\partial{x_a}}, \
J_{ab}=x_a\p_{x_b}-x_b\p_{x_a}, \
 a<b=1,\dots,n, \
I= \frac{1}{2}
%\exp\big(-\int f(t)dt\big)
\emph{e}^{-\int f(t)dt}u\p_u, \\
 {\cal{G}}_a=\Big(\int \emph{e}^{-\int
f(t)dt}dt\Big) \partial_{x_a} - \frac{x_a}{2}\emph{e}^{-\int
f(t)dt}u\p_u, \ a=1,\dots,n. \ea
\end{equation}
\end{theo}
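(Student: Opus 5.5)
The plan is the classical Lie algorithm applied to (\ref{1-0}). We look for infinitesimal generators
\[ Q=\tau(t,\bar x,u)\p_t+\xi^a(t,\bar x,u)\p_{x_a}+\eta(t,\bar x,u)\p_u \]
and impose the invariance condition $Q^{(2)}\big(u_t-\nabla^2u+f(t)u\ln u\big)=0$ on the solution manifold. Splitting the resulting identity with respect to the derivatives of $u$ gives the standard restrictions for a second-order evolution equation with Laplacian principal part. Namely, $\tau=\tau(t)$, $\xi^a=\xi^a(t,\bar x)$, and $\eta=\alpha(t,\bar x)u+\beta(t,\bar x)$, i.e.\ $\eta$ is linear in $u$. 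In addition, $\xi^a_{x_b}+\xi^b_{x_a}=0$ for $a\ne b$ and $2\xi^a_{x_a}=\tau'(t)$ for every $a$. These give the conformal-type structure
\[ \xi^a=\tfrac12\tau'x_a+c_{ab}(t)x_b+g_a(t), \qquad c_{ab}=-c_{ba}. \]
Here we only need $n\ge2$; for $n=1$ the antisymmetric part is absent.

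The remaining classifying equation comes from the terms free of second derivatives:
\[ \eta_t-\nabla^2\eta+2\alpha_{x_a}u_{x_a}-\xi^a_t u_{x_a}+\dots+f'\tau u\ln u+f(\eta\ln u+\eta)-\tau' f u\ln u-(\text{substitution } u_t\to\dots)=0. \]
We split it with respect to $u_{x_a}$, $u\ln u$, $u$, $\ln u$ and $1$. The coefficient of $u_{x_a}$ gives $2\alpha_{x_a}=-\xi^a_t$. The $\ln u$ and $1$ terms force $\beta=0$: the term $f\beta\ln u$ cannot be balanced, since $f\ne0$. The $u\ln u$ coefficient gives $f'\tau+f\tau'=0$ up to the $\alpha$-contribution. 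More precisely, substituting $\eta=\alpha u$ leaves, from the $u\ln u$ terms, the identity $(f\tau)'=0$ plus a contribution of $\alpha$ absorbed in the $u$-terms. The $u$-coefficient gives
\[ \alpha_t-\nabla^2\alpha+f\alpha=0 . \]
For the principal algebra $f$ is arbitrary, so we require all equations to hold identically in $f$. From $(f\tau)'=0$ this forces $\tau=0$. Then $\xi^a_t=c_{ab}'x_b+g_a'$ and $\alpha=-\tfrac12\big(g_a'x_a+\tfrac12c_{ab}'\cdot\ldots\big)$. Integrability of $\alpha_{x_a}$ (mixed partials) forces $c_{ab}'=0$, so $\alpha=-\tfrac12 g_a'(t)x_a+h(t)$. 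Substituting into $\alpha_t+f\alpha=0$ (note $\nabla^2\alpha=0$) and splitting in $x_a$ gives
\[ g_a''+fg_a'=0,\qquad h'+fh=0. \]
Hence $g_a'=c_a\,e^{-\int f dt}$, $g_a=c_a\int e^{-\int f}dt+k_a$, and $h=c_0e^{-\int f dt}$.

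Collecting the arbitrary constants $k_a$, $c_{ab}$, $c_0$, $c_a$ gives exactly $P_a$, $J_{ab}$, $I$ and ${\cal G}_a$ (after fixing the factor $\tfrac12$). The count is $n+\tfrac12n(n-1)+1+n=1+\tfrac12n(n+3)$. Closure as a Lie algebra is automatic, but can be confirmed by computing commutators such as $[P_a,{\cal G}_b]=-\delta_{ab}I$.

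The main obstacle will be the careful splitting of the $u\ln u$, $\ln u$ and $u$ terms. The logarithm interacts with $\eta=\alpha u+\beta$ through $f\eta(\ln u+1)$. One must verify that this interaction yields precisely $\tau f'+\tau' f=0$ and $\beta=0$, and does not generate a further constraint on $\alpha$. This is also where the arbitrariness of $f$ is used to kill $\tau$: one writes the condition as $\tau f'+\tau' f=0$ and demands that it hold for generic $f$, which gives $\tau=\tau'=0$.
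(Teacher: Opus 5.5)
Your proposal is correct and follows essentially the same route as the paper. You reduce the invariance criterion to the determining system with $\eta=ru$, $(\tau f)'=0$, $\xi^a_t=\nabla^2\xi^a-2r_{x_a}$ and $r_t=\nabla^2 r-fr$, use the arbitrariness of $f$ to set $\tau=0$ (the paper makes the same one-line appeal to the algebraic condition $\tau f=C$), and integrate the remaining linear ODEs. The differences are minor: you treat general $n$ directly and show the rotation coefficients $c_{ab}(t)$ are constant via mixed partials, whereas the paper works out $n=2$ explicitly and says the general case is analogous. Your $g_a=c_a\int e^{-\int f\,dt}dt+k_a$ also fixes a slip in the paper's displayed formula for $B_a$.
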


%It turns out that there are
\begin{theo} \label{th2}
 Equation (\ref{1-0}) depending on the form of  $f(t)$
admits only  four extensions of the principal algebra (\ref{2-1}).
All these algebras are  $(2+\frac{1}{2}n(n+3))$-dimensional Lie
algebras but with different structures. The relevant basic operators
are listed below.\\
Case 1.$f(t)=f_0=\mathtt{const}$:
\begin{equation}\label{2-2}\ba{l}
\medskip
P_a= \partial_{x_a}\equiv \frac{\partial}{\partial{x_a}}, \
J_{ab}=x_a\p_{x_b}-x_b\p_{x_a}, \
 a<b=1,\dots,n, \
I= \frac{1}{2}e^{-f_0t}u\p_u, \\
%\medskip\\
{\cal{G}}_a= e^{-f_0t}\partial_{x_a}+ \frac{x_a}{2}e^{-f_0t}u\p_u, \
a=1,\dots,n, \  P_t= \partial_{t}. \ea \end{equation}
 Case 2.$f(t)=t^{-1}$:
\begin{equation}\label{2-3}\ba{l}
\medskip
P_a= \partial_{x_a}\equiv \frac{\partial}{\partial{x_a}}, \
J_{ab}=x_a\p_{x_b}-x_b\p_{x_a}, \
 a<b=1,\dots,n, \
I=\frac{1}{2} t^{-1}u\p_u, \\
 {\cal{G}}_a= \ln t\partial_{x_a}- \frac{x_a}{2}t^{-1}u\p_u, \
a=1,\dots,n, \  D=
2t\partial_{t}+x_1\partial_{x_1}+\dots+x_n\partial_{x_n}. \ea
\end{equation}
Case 3.$f(t)=At^{-1}, \ A\not=1$:
\begin{equation}\label{2-4}\ba{l}
\medskip
P_a= \partial_{x_a}\equiv \frac{\partial}{\partial{x_a}}, \
J_{ab}=x_a\p_{x_b}-x_b\p_{x_a}, \
 a<b=1,\dots,n, \
I=\frac{1}{2} t^{-A}u\p_u, \\
{\cal{G}}_a=t^{1-A} \partial_{x_a}- (1-A)\frac{x_a}{2}t^{-A}u\p_u, \
a=1,\dots,n, \  D=
2t\partial_{t}+x_1\partial_{x_1}+\dots+x_n\partial_{x_n}. \ea
\end{equation}
Case 4. If $f(t)$ is a solution of the third-order ODE
\begin{equation}\label{2-5a}\frac{d^3}{dt^3}\big(f^{-1}\big)+f\frac{d^2}{dt^2}\big(f^{-1}\big)=0,
\  \frac{d^2f}{dt^2}\not=0
\end{equation}
%with the restriction
then the operator
\begin{equation}\label{2-5} \Pi= 2g(t)\partial_{t}+\frac{dg(t)}{dt}\big(
x_1\partial_{x_1}+\dots+x_n\partial_{x_n}\big)-\Big(\frac{d^2g(t)}{dt^2}\frac{|\bar
x|^2}{4}+\frac{n}{2}\emph{e}^{-\int
f(t)dt}\int \emph{e}^{\int
f(t)dt}\ \frac{d^2g(t)}{dt^2}dt\Big)u\p_u
\end{equation} and those listed in (\ref{2-1}) apply. In (\ref{2-5}), the
notations  $g(t)=\frac{1}{f(t)}\equiv f^{-1}$ and  $ |\bar
x|^2=x_1^2+\dots+x_n^2$ are used.
\end{theo}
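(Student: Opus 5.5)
The plan is to run the classical Lie algorithm on (\ref{1-0}) with $f(t)$ treated as an arbitrary element, and then split the classifying equation into cases. Take a general infinitesimal operator $Q=\tau\partial_t+\xi^a\partial_{x_a}+\eta\partial_u$ and impose the invariance condition on the manifold $u_t=\nabla^2u-f(t)u\ln u$. For second-order evolution equations of this type the standard first steps give
\[
\tau=\tau(t), \qquad \xi^a_{x_b}+\xi^b_{x_a}=\delta_{ab}\tau', \qquad \eta=r(t,\bar x)u+q(t,\bar x).
\]
The first two relations together with the remaining part of the system force $\xi^a=\frac12\tau' x_a+c_{ab}x_b+h_a(t)$ with $c_{ab}=-c_{ba}$, and in dimension $n\ge2$ the conformal terms are excluded exactly as for the heat equation. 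Next, split the remaining determining equation with respect to $u\ln u$, $u$ and $1$; the logarithm makes these coefficients independent. This yields:

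the $u\ln u$ coefficient: $\tau f'+\tau' f=0$, i.e. $(\tau f)'=0$;

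the $u$ coefficient: $r_t-\nabla^2r+f r=0$ together with $2\nabla r=-\xi_t$ (from the $u_{x_a}$ terms);

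the $u^0$ part: $q_t=\nabla^2q$ and $fq\ln u$-type terms, which force $q=0$.

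From $2\nabla r=-\xi_t$ one obtains
\[
r=-\tfrac18\tau''|\bar x|^2-\tfrac12 h_a'x_a+p(t),
\]
and substitution into $r_t-\nabla^2r+fr=0$, split in powers of $x$, gives three ODEs:
\[
\tau'''+f\tau''=0,\qquad h_a''+fh_a'=0,\qquad p'+fp=-\tfrac{n}{4}\tau''.
\]

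For arbitrary $f$, the condition $(\tau f)'=0$ forces $\tau=c/f$, and consistency then demands $\tau=0$ generically. The $h_a$ equation gives $h_a=\alpha_a\int e^{-\int f}+\beta_a$, and the homogeneous part of the $p$-equation gives $p=\gamma e^{-\int f}$. Together these reproduce (\ref{2-1}) and prove Theorem~\ref{th1}; the dimension count is $n+\frac12n(n-1)+1+n=1+\frac12n(n+3)$.

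An extension exists exactly when $\tau\ne0$ is allowed. Then $\tau=2\lambda/f=2\lambda g$, and the remaining condition is that $g$ satisfies $g'''+fg''=0$, which is precisely (\ref{2-5a}). Solving $p'+fp=-\frac n2 g''$ by variation of constants produces the $u\partial_u$ coefficient of (\ref{2-5}); the $|\bar x|^2$ term and $\tfrac12\tau'x_a$ give the rest of $\Pi$. Since $\tau$ is determined up to a constant multiple, the extension is exactly one-dimensional.

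Finally, the case $g''=0$ must be split off. Here $g=\alpha t+\beta$, so after a time translation either $f=f_0$ or $f=A/t$. When $g''=0$ the operator $\Pi$ reduces to $P_t$ or to $D$, and its $u\partial_u$ term vanishes. Substituting these forms into (\ref{2-1}) gives the explicit generators ${\cal G}_a$ and $I$ of Cases 1--3. For Case 2 this uses $\int e^{-\ln t}dt=\ln t$, and for Case 3 it uses $t^{1-A}/(1-A)$, rescaled.

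I expect the main obstacle to be the careful splitting with respect to $u\ln u$ versus $u$, including ruling out $q\neq0$ and nonlinear $\eta$. Equivalence transformations also need checking, namely $t\to t+t_0$ and the scaling $u\to u^{k}$-type maps, which are not present since $f$ is not rescalable. This check ensures that the four cases are inequivalent and exhaustive, and in particular that Case 4 genuinely excludes $g''=0$.
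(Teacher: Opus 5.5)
Your proposal is correct and follows essentially the same route as the paper: the same determining system (your $\tau=2g$), the split $\tau=0$ versus $\tau=c/f\neq0$, the same three ODEs for $g$, $h_a$ (the paper's $B_a$) and $p$ (the paper's $D$), and the final split $g''=0$ versus $g''\neq0$. Two small points. Your closing remark on equivalence transformations is off: the class does admit $t\to|C|t$, $\bar x\to\sqrt{|C|}\,\bar x$, $f\to|C|f$, although your observation that $\tau'''+f\tau''=0$ is linear in $\tau$ makes the paper's normalisation $C=2$ unnecessary. Also, a one-line local-existence argument for $gg'''+g''=0$ with $g''(t_0)\neq0$ would confirm that Case 4 is non-empty, as the paper does via Appendix A.
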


\textbf{Proof of Theorems~\ref{th1} and \ref{th2}.} Here we use the
algorithm, which was suggested in  \cite[Chapter 2]{ch-se-pl-2018}
for the LSC of evolutionary equations. So, we consider  (\ref{1-0}) as a class of RD equations, in which the  Lie symmetry
of each equation with a specified function $f(t)$ may depend on its form.
%Because the group of ETs is already identified,

First of all, we need to find the principal algebra of the given class, i.e. the Lie algebra that is allowed by any equation from this class, independently of $f(t)$.
So, we start from to the most general form of Lie symmetry of the given class:
 \be \label{2-7}
X=\xi^0(t,\bar x,u)\partial_t+\xi^{a}(t,\bar x,u)\partial_{x_a} +\eta(t,\bar x,u)\partial_{u},\ee
where  $\xi^i, \ i=0,\dots,n$ and  $\eta$ are
to-be-determined  functions and a summation over the repeated indexes $a=1,\dots,n$ is assumed. The well-known infinitesimal criterion
of invariance of (\ref{1-0}) with respect to the symmetry $X$ reads
as
\[ \mbox{\raisebox{-1.6ex}{$\stackrel{\displaystyle
X}{\scriptstyle 2}$}}\,
\Big( u_{t}-\nabla^2 u +f(t)u\ln u  \Big)\Big\vert_{\cal{M}}=0,
\]
where the operator $
\mbox{\raisebox{-1.6ex}{$\stackrel{\displaystyle X}{\scriptstyle
2}$}}$ is the second-order prolongation of the operator $X$, and
the manifold   ${\cal{M}}$ consists of the equation in question, i.e.
\[ {\cal{M}}= \{ u_{t}-\nabla^2 u +f(t)u\ln u=0 \}. \]
Because the second-order prolongation of the operator $X$ is
calculated via the well-known formulae \cite{bl-anco-10, arrigo-15,
ch-se-pl-2018} (actually, these were derived by S.~Lie in his
classical papers), using  the above criterion and making  relevant
computations,  one  obtains to the following  restriction  on
$\xi^i$ and  $\eta$: \be \label{2-8} \xi^0(t,\bar x,u)=\xi^0(t),
\quad \xi^{a}(t,\bar x,u)= \xi^{a}(t,\bar x), \ a=1,\dots,n, \quad
\eta(t,\bar x,u)=r(t,\bar x)u, \ee where the functions on the
right-hand sides remain to be determined. Having formulae
(\ref{2-8}), in which the dependence on $u$ is already identified,
it is easy
%easily !!!
 to derive
the so-called system of determining equations (DEs):
\be \label{2-9} \ba{l}\medskip
\xi^0_t=2\xi^a_{x_a}, \quad  \xi^a_{x_b}+\xi^b_{x_a}=0, \ a\not=b=1,\dots,n \\
\medskip
\xi^a_{t}=\nabla^2\xi^a - 2r_{x_a}, \  a=1,\dots,n \\
r_t= \nabla^2r - f(t)r,  \quad \xi^0(t)f(t)=C,  \ C \in \mathbb{R}.
\ea\ee

In the case $n=1$, the system of DEs (\ref{2-9}) consists of three linear PDEs and the algebraic condition  $\xi^0(t)f(t)=C$,
therefore its integration is much simpler than for $n>1$. In particular, one easily derives three Lie symmetries arising in (\ref{2-1}) in the 1D case.

Let us consider the first non-trivial case occurring for $n=2$, when
(\ref{2-9}) consists of six PDEs. Solving the first three equations,
we obtain \be \label{2-10}   \xi^0=2g(t), \
\xi^1=\frac{dg}{dt}x_1+A(t)x_2+B_1(t), \
\xi^2=\frac{dg}{dt}x_2-A(t)x_1+B_2(t), \ee where the functions $g, \
A, B_1$ and $B_2$ need to be determined from the remaining three
PDEs involving the functions $r$ and $f$. Notably,
$g(t)f(t)=\frac{C}{2}$  follows from the last line of (\ref{2-9}).
Assuming that $f(t)$ is an arbitrary smooth function, we immediately
derive $g(t)=0$ from the algebraic condition and then the three
equations listed in the second and third lines of (\ref{2-9}) are
reducible to the linear ODEs, leading to
\[ \ba{l}\medskip  B_1(t)=C_1\emph{e}^{-\int f(t)dt}+C_3, \  B_2(t)=C_2 \emph{e}^{-\int f(t)dt}+C_4,  \ A(t)=  C_5,\\
r(t,\bar{x})=-\frac12 \emph{e}^{-\int
f(t)dt}(C_1x_1+C_2x_2)+C_6\emph{e}^{-\int f(t)dt}, \ea \] where
$C_i, i=1,\dots,6$ are arbitrary constants. Using the above formulae
and (\ref{2-9}), we obtain the most general form of Lie symmetries
provided $f(t)$ is  arbitrary. As a result, the six-dimensional Lie
algebra arising in Theorems~\ref{th1} in the case $n=2$ is obtained.

Now we need to identify all possible extensions of this Lie algebra depending on $f(t)$.
In this case, $g(t)\not=0$ (see (\ref{2-10})) and then $g(t)=\frac{C}{2f(t)}$, while  $A(t)$ is again a constant, and the system of  ODEs
\be \label{2-11}  \frac{d^3g}{dt^3} +f\frac{d^2g}{dt^2}=0, \ \frac{d^2B_a}{dt^2} +f\frac{dB_a}{dt}=0, \ a=1,2, \ \frac{dD}{dt}+fD=-\frac{d^2g}{dt^2} \ee
must be solved. Having done this, one immediately obtains
\be \label{2-12}  r(t,\bar{x})= -\frac14 \frac{d^2g}{dt^2}|\bar{x}|^2 - \frac{dB_a}{dt}x_a +D(t). \ee
Assuming $\frac{d^2g}{dt^2}=0$, the ODE system (\ref{2-11}) can easily be  integrated and, taking into account
the algebraic condition  $\xi^0(t)f(t)=C$, we obtain exactly the operators arising in Cases 1, 2 and 3 of
Theorems~\ref{th2}. To be precise,  the functions $f(t)=(t+t_0)^{-1}$ and $f(t)= A(t+t_0)^{-1}$ are obtained in Cases
 2 and 3, however, the   class of RD equations (\ref{1-0}) admits the equivalence transformation $t+t_0 \longrightarrow t$.

The case $\frac{d^2g}{dt^2}\not=0$ is  highly non-trivial because
the first ODE in (\ref{2-11}) is equivalent to \be \label{2-13}
g\frac{d^3g}{dt^3} +\frac{C}{ 2}\frac{d^2g}{dt^2}=0, \ee which is
integrable in a parametric form  only (see Appendix A). Notably, we
may set $C=2$ without losing generality because $t \longrightarrow
|C|t, \  \bar{x} \longrightarrow \sqrt{|C|}\bar{x},  \  f
\longrightarrow |C|f$ is an equivalence transformation. According to
the general theory of ODEs, the above third-order ODE must possess
solutions involving three arbitrary constants,  not just  those of
linear form. Thus, taking  $g(t)=\frac{1}{f(t)}$ to  satisfy this
ODE and  taking into account (\ref{2-10})  and (\ref{2-12}),     we
obtain an additional Lie symmetry of (\ref{1-0})
\begin{equation}\label{2-5*} \Pi= 2g(t)\partial_{t}+\frac{dg(t)}{dt}\big(
x_1\partial_{x_1}+x_2\partial_{x_2}\big)-\Big(\frac{d^2g(t)}{dt^2}\frac{|\bar
x|^2}{4}+\emph{e}^{-\int
f(t)dt}\int \emph{e}^{\int
f(t)dt}\ \frac{d^2g(t)}{dt^2}dt\Big)u\p_u.
\end{equation}
This means Case 4 for $n=2$ is identified. For $n>2$, each step of the proof is a quite similar.
In particular, $n-1$ terms are obtained in (\ref{2-10}) instead of $A(t)x_a$
 and the term $-\frac{n}{2} \frac{d^2g}{dt^2}$ springs up in the right-hand-side
 of the last of (\ref{2-11}) instead of $-\frac{d^2g}{dt^2}$.

%\blacksquare
$\blacksquare$

\begin{remark}
Each function $f(t)$ arising in Cases 2--4 can be slightly
generalised by the equivalence transformation $ t \to t+ t_0, \  t_0
\in \mathbb{R}$.
\end{remark}

%\begin{remark}
To the best of our knowledge, the general solution of  ODE
(\ref{2-5a}) was unknown. In particular, this ODE is not listed in the well-known handbooks \cite{kamke,pol-za-2018}.
We were, however,  able to construct its general solution in parametric  form (see Appendix A).
 Because the general solution involves three arbitrary constants,
 %(according to the classical  theory of ODEs),
 it is clear  that  Case 4 is not empty.

It can be verified that  Cases 1--3 can be derived in a formal way
from  Case 4 under the restriction  $\frac{d^2g}{dt^2}=0$. However,
the Lie algebras arising in  Cases 1--4 are essentially different,
therefore all cases should be distinguished from the point of view
of theory of Lie algebras. Moreover, applications of relevant Lie
symmetries for the  search for exact solutions may lead to essentially
different structures of the solutions in Cases 1--4.

\section{\bf  Analysis of Lie algebras and reductions to ordinary differential equations} \label{sec:4}

\subsection{\bf Analysis of Lie algebras}

All the Lie algebras arising in Theorem \ref{th2}  have the same dimensionality $N=(2+\frac{1}{2}n(n+3))$.
However, they are  essentially different algebras. In order to demonstrate this, we analyse commutators (Lie brackets) of their basic operators.  It is enough  to consider 1D space, for which  $N=4$, i.e. four-dimensional algebras are obtained.

\begin{table}[h!]
\caption{Table of commutators of the Lie algebra  (\ref{2-2}) with $n=1$}\medskip
\label{tab1}   \begin{center}
\begin{tabular}{|c|c|c|c|c|}
\hline  & $P_t$  &  $P_x$ & $I$  & ${\cal{G}}$ \\
 \hline &&&& \\
  $P_t$  & $0$ & $0$& $-f_0I$& $-f_0{\cal{G}}$  \\ \hline &&&&\\
  $P_x$  &$0$ & $0$ & $0$& $I$ \\ \hline &&&& \\
   $I$  & $f_0I$& $0$&$0$&$0$  \\ \hline &&&&\\
    ${\cal{G}}$  & $f_0{\cal{G}}$&$-I$ &$0$&  $0$
   \\  \hline
 \end{tabular}\end{center}
 \end{table}

  \begin{table}[h!]
\caption{Table of commutators of the Lie algebra  (\ref{2-3}) with $n=1$}\medskip
\label{tab2}   \begin{center}
\begin{tabular}{|c|c|c|c|c|}
\hline  & $D$  &  $P_x$ & $I$  & ${\cal{G}}$ \\
 \hline &&&& \\
  $D$  & $0$ & $-P_x$& $-2I$& $2P_x-{\cal{G}}$  \\ \hline &&&&\\
  $P_x$  &$P_x$ & $0$ & $0$& $-I$ \\ \hline &&&& \\
   $I$  & $2I$& $0$&$0$&$0$  \\ \hline &&&&\\
    ${\cal{G}}$  & ${\cal{G}}-2P_x$& $I$ &$0$&  $0$
   \\  \hline
 \end{tabular}\end{center}
 \end{table}

 \begin{table}[h!]
\caption{Table of commutators of the Lie algebra  (\ref{2-4}) with $n=1$}\medskip
\label{tab3}   \begin{center}
\begin{tabular}{|c|c|c|c|c|}
\hline  & $D$  &  $P_x$ & $I$  & ${\cal{G}}$ \\
 \hline &&&& \\
  $D$  & $0$ & $-P_x$& $-2AI$& $(1-2A){\cal{G}}$  \\ \hline &&&&\\
  $P_x$  &$P_x$ & $0$ & $0$& $(A-1)I$ \\ \hline &&&& \\
   $I$  & $2AI$& $0$&$0$&$0$  \\ \hline &&&&\\
    ${\cal{G}}$  & $(2A-1){\cal{G}}$& $(1-A)I$ &$0$&  $0$
   \\  \hline
 \end{tabular}\end{center}
 \end{table}

  \begin{table}[h!]
\caption{Table of commutators of the Lie algebra  (\ref{2-1}) and (\ref{2-5}) with $n=1$}\medskip
\label{tab4}   \begin{center}
\begin{tabular}{|c|c|c|c|c|}
\hline  & $\Pi$  &  $P_x$ & $I$  & ${\cal{G}}$ \\
 \hline &&&& \\
  $\Pi$  & $0$ & $-{\cal{G}}$& $-\gamma I$& $-\gamma{\cal{G}}-\beta P_x$  \\ \hline &&&&\\
  $P_x$  &${\cal{G}}$ & $0$ & $0$& $-I$ \\ \hline &&&& \\
   $I$  & $\gamma I$& $0$&$0$&$0$  \\ \hline &&&&\\
    ${\cal{G}}$  & $\gamma{\cal{G}}+\beta P_x$& $I$ &$0$&  $0$
   \\  \hline
 \end{tabular}\end{center}
 \end{table}

\newpage

\begin{remark}
The constants $ \beta$ and $\gamma$ in Table \ref{tab4}   are determined  up to a single arbitrary constant, this corresponding to the nonlinear  solution of the third-order ODE (\ref{2-5a}): the general solution of this equation involves  three arbitrary  constants $C_1, C_2$ and $C_3$ (see Appendix A), but  $C_2$  and $C_3$ are  reducible to $C_2=1$ and $C_3=0$ by scale transformations and time-translations, respectively.
\end{remark}

%It can be easily seen
A simple analysis  of non-zero  Lie brackets in Tables \ref{tab1}--\ref{tab4} leads to a preliminary conclusion that the relevant four-dimensional Lie algebras are inequivalent. We remind the reader that sometimes   Lie algebras of the same dimensionality can be different representations of the same Lie algebra (see, e.g., a highly non-trivial example in \cite{ch-ki-25}). So, we used a seminal work \cite{Pat-Wint}, in which all inequivalent  four-dimensional Lie algebras are analysed,  to explore this issue.
After a careful analysis, we concluded that all the Lie algebras derived here are indeed inequivalent.

The first  Lie algebra presented in  Table \ref{tab1} is nothing else but  a representation of the four-dimensional Lie algebra $A^0_{4,9}$ listed in Table II of \cite{Pat-Wint}. In fact, setting $e_1=I, \ e_2={\cal{G}}, \ e_3=-P_x$
and $e_4=P_t$,  one obtains exactly the same non-zero  Lie brackets  ($f_0$ can be reduced to 1 without losing generality).
The Lie algebra presented  in Table \ref{tab2} is  a representation of the four-dimensional Lie algebra
$A_{4,7}$ listed in Table II of \cite{Pat-Wint}. This  is shown by setting $e_1=I, \ e_2=-2P_x, \ e_3={\cal{G}}$
and $e_4=D$.
The Lie algebra presented  in Table \ref{tab3} is  a representation of the algebra
$A^b_{4,9}$ listed in Table II of \cite{Pat-Wint}. This is shown by setting $e_1=(A-1)I, \ e_2=P_x, \ e_3={\cal{G}}$ and $e_4=D$ (the parameter $b=2A-1$).

Finally, we need to identify the type of the Lie algebra in Table \ref{tab4}. A problem occurs
in this case because we cannot specify the constants  $ \beta$ and $\gamma$, these constants
 in Table \ref{tab4}   being determined  up to a single arbitrary
 constant -- see Remark 2.
% this corresponding to a    solution of the third-order nonlinear ODE (\ref{2-5a}).
 We only note that this algebra with $\beta=-1$ and $\gamma=2$
is equivalent to the Lie algebra $A_{4,7}$ listed in Table II of \cite{Pat-Wint}.
%However, taking into account that  these constants can be expressed in terms of  a single parameter, we assume $that this is a representation of the  algebra
%$A^a_{4,11}$ listed in Table II of  \cite{Pat-Wint}. In particular, setting  $\alpha=1, \
%one easily obtains the representation of $A^a_{4,11}$ with $a=1$.
Further analysis  is needed in order to show rigorously that any pair of  $ \beta$ and $\gamma$
leads to a  Lie algebra listed in \cite{Pat-Wint}. However,  this  is a purely algebraic problem
 and  lies beyond scopes of this study; the form-preserving
 transformations of Section 4.3 seem to shed light on this matter.
% WE CAN SET   gamma=1 ????????????????????????????????????

\subsection{\bf  Lie reductions to ordinary differential equations}

Here we are looking for Lie reductions and exact solutions of the RD equation  (\ref{1-1})
in one space dimensionality, i.e.
\begin{equation}\label{4-1} u_{t}= u_{xx} -f(t)u\ln u. \end{equation}

According to the direct approach for finding exact solutions, one should start from the
 most general linear combination of Lie symmetries, which  reads as
\begin{equation}\label{4-2}
Y=\alpha_1\p_x+ 2\alpha_2 {\cal{G}}+2\alpha_3I,
\end{equation}
where $\alpha_i, \ i=1,2,3$ are arbitrary parameters,
$|\alpha_1|+|\alpha_2|\not=0$ (otherwise a Lie reduction does not
exist)  and
\begin{equation}\label{4-3} I= \frac{1}{2}F(t)u\p_u, \quad
 {\cal{G}}=\Big(\int F(t)dt\Big) \partial_{x} -
\frac{x}{2}F(t)u\p_u, \quad F(t)=\emph{e}^{-\int
f(t)dt}. \end{equation}

It can be easily noted that there are only two  cases leading to
inequivalent reductions: $\alpha_2\not=0$  and $\alpha_2=0$.
Assuming the first restriction, the corresponding invariant surface
condition is
\[ Y(u)=0 \Leftrightarrow (\alpha_1+2\alpha_2\int F(t)dt)u_x =(\alpha_3- \alpha_2 x)u, \]
which can be simplified by invariance transformation $\alpha_2x
-\alpha_3 \rightarrow \alpha_2x $ to the form \textcolor{blue} {
\begin{equation}\label{4-4}
2\Big(\alpha+\int F(t)dt\Big)u_x = -F(t)xu, \quad
\alpha=\frac{\alpha_1}{2\alpha_2}. \ee } So, integrating
(\ref{4-4}), we obtain the ansatz
\begin{equation}\label{4-5}
u= u_0(t)\exp\Big(-\frac{x^2F(t)}{4(\alpha+\int F(t)dt)}\Big),\end{equation}
where the function  $u_0(t)$ to-be-determined.
Thus, substituting ansatz (\ref{4-5}) into (\ref{4-1}), one obtains
\begin{equation}\label{4-5a} \frac{du_0}{dt} + \frac{F(t)}{2(\alpha+\int F(t)dt)}u_0 +f(t)u_0\ln u_0=0. \ee
The above first-order ODE is integrable and its general solution is
\[ u_0(t)=\exp\Big(\beta F(t) -\frac{1}{2}F(t)\int\frac{dt}{\alpha+\int F(t)dt}\Big). \]
Finally, using ansatz  (\ref{4-5}), we arrive at the family of exact solutions of  (\ref{4-1})
\begin{equation}\label{4-6}
u(t,x)=\exp\Big(\beta F(t) -\frac{1}{2}F(t)\int\frac{dt}{\alpha+\int F(t)dt}-\frac{x^2F(t)}{4(\alpha+\int F(t)dt)}\Big), \quad F(t)=\emph{e}^{-\int
f(t)dt},\end{equation}
where $\alpha$ and $\beta$ are arbitrary constants. Because the RD equation  (\ref{1-1}) is invariant under
 space translations, one may introduce into (\ref{4-6}) the third arbitrary parameter: $x \longrightarrow x+x_0$.
 The above family of exact solutions with $\alpha>0$ is of particular
 significance, since it captures the intermediate asymptotics of the
 evolution to the full nonlinearity (cf. footnote 2).

Now we turn to the case $\alpha_2=0$ and then automatically  $\alpha_1\not=0$.
In this case, the relevant ansatz is
\[ u= u_0(t)\exp(\alpha xF(t)),  \quad  \alpha=\frac{\alpha_3}{\alpha_1},\]
which reduces the RD equation  (\ref{1-1}) to an ODE with the same structure as (\ref{4-5a}).
As a result, the following family of exact solutions of  (\ref{4-1}) is derived
\begin{equation}\label{4-7}
u(t,x)=\exp\Big(\beta F(t)+\alpha^2F(t)\int F(t)dt + \alpha xF(t)\Big). \end{equation}

 %\cite{bl-anco-10, arrigo15, ch-se-pl-2018}.

\subsection{\bf  Non-Lie reductions and form-preserving transformations}

It can be noted that the nonlinear equation (\ref{1-0}) admits  non-Lie reductions.
This is worth highlighting since they describe the intermediate asymptotics  of the Cauchy
 problem for the transition from (\ref{1-0})
to (\ref{0-1}) with (\ref{0-2}). In the 1D case the relevant ansatz has the form
\begin{equation}\label{4-10} u= \mathtt{exp}\big(a(t)+b(t)x+c(t)x^2\big), \ee
where $a, \ b$ and $c$ are to-be-determined functions with $c<0$
occuring in typical physical applications). Substituting the above
ansatz into (\ref{4-1}) and making simple calculations, one obtains
the following non-Lie reduction of the PDE in question:
 \begin{equation}\label{4-11}\ba{l}
%\medskip
\frac{da}{dt}=b^2 +2c - fa, \\
\frac{db}{dt}=4bc - fb, \\
\frac{dc}{dt}=4c^2 - fc.
\ea  \end{equation}

It should be noted that the above reduction was derived earlier only in the case $f(t)=\mathtt{constant}$
(see, for example, \cite{gala-svir-book}).

Interestingly, the nonlinear ODE system (\ref{4-11}) is integrable independently of the form
of the function $f(t)$. In fact, it can be noted that the second and the third equations from (\ref{4-11}) lead to the relation $b=2x_0c$ ($x_0$ is an arbitrary constant). So, introducing the new function $A(t)=a(t)- x_0^2c(t)$, the first equation from (\ref{4-11}) takes the linear form
\[ \frac{dA}{dt}=2c - fA. \]
Thus, integrating the third  equation from (\ref{4-11}) (which  is a Bernoulli equation and hence is reducible to a linear equation by the substitution $c(t)=\frac{1}{C(t)}$), we immediately obtain the functions $b$ and $c$. Having  $c$, the function $A$ is obtainable from the above linear ODE.  At the final stage, substituting the functions $a, \ b$ and $c$ into the non-Lie ansatz (\ref{4-10}),
a three-parameter family of exact solutions of the RD equation  (\ref{4-1}) is constructed.
%Surprisingly,
 The family obtained in fact  coincides (up to notations) with the family  of exact solutions   (\ref{4-6}).
It can be highlighted that here we found another confirmation of an old  hypothesis claiming
 that non-Lie reductions of nonlinear PDE admitting a wide Lie symmetry often produce exact solutions that are obtainable via Lie symmetries. This hypothesis, which   was discussed probably for the first time in   \cite{ch-JPhys-98} (see also \cite[Section 4.1]{ch-se-pl-2018}), has no proof; however, there are many examples confirming its broad applicability.

\begin{remark}
The above ansatz can be straightforwardly generalised to the
multidimensional case as follows
\[ u= \mathtt{exp}\big(a(t)+b_i(t)x_i+c_{ij}(t)x_ix_j\big), \]
where summations over the repeated indices  $i$ and $j$ from $1$ to $n$ are assumed.
\end{remark}

 In conclusion of this section, we present a highly non-trivial example of transformations
 for the class  of equations  (\ref{4-1}) allowing the connection  of solutions of two representatives
  of this class,  with the functions  $f$ and $F$ say,  provided  correctly-specified restrictions take place.
 Here we restrict ourselves to $n=1$ and   transform  (\ref{4-1}) using  the transformation
 \begin{equation}\label{4-12}
 u(t,x)= \mathtt{exp}\big(a(t)+b(t)x+c(t)x^2\big)U(t,x).
\end{equation}
%Assuming ???
Provided  that the  functions  $a, \ b$ and $c$  satisfy (\ref{4-11}), one easily notes that   the equation
 \begin{equation}\label{4-13} U_{t}= U_{xx}+(2b(t)+4c(t)x)U_x -f(t)U\ln U\ee
is obtained instead of (\ref{4-1}).
Now we transform the space variable as follows
\[ x=s(t) +\frac{X}{\lambda(t)}, \]
where the functions  $\lambda(t)$ and $s(t)$  form a solution of the
linear ODE system \begin{equation}\label{4-13*}
\frac{d\lambda}{dt}=4c\lambda,  \quad \frac{ds}{dt}=-(2b+4cs). \ee
Having this done, (\ref{4-13})  takes the form
\begin{equation}\label{4-14} U_{t}=\lambda^2(t) U_{XX}-f(t)U\ln U.\ee
At the last stage, we apply the transformation for  the time variable:
\[ T=\int\lambda^2(t)dt\equiv \Lambda(t), \]
which reduces (\ref{4-14})  to the form
\begin{equation}\label{4-15} U_{T}= U_{XX}-F(T) U\ln U, \end{equation}
where $F(T)=\frac {f(\Lambda^{-1}(T))}{\lambda^2(\Lambda^{-1}(T))}$ and $\Lambda^{-1}$ is an inverse function to
$\Lambda$.

 Of course, the above chain of  transformations does
not form in general an equivalence transformation for the class  of
equations (\ref{4-1}). However, one may consider them as  a highly
non-trivial example of form-preserving (admissible) transformations
introduced independently in  \cite{kingston-91} and \cite{ga-wi-92}
for the analysis of evolution equations. Later these transformations
were used for solving LSC problems (see \cite[Section
2.3]{ch-se-pl-2018} and references cited therein).

It should  be pointed out  that the chain of transformations that
maps  (\ref{4-1}) into (\ref{4-15}) is reciprocal
%as can be observed by
in the following sense:  introducing $\Lambda(T)$ and $C(T)$
according to
\[ t= \int\Lambda^2(T)dT, \quad C(T)=-\frac{c(t)}{\lambda^2(t)}, \]
from which follows that $\Lambda(T)=\frac{1}{\lambda(t)}$  and
\[ \frac{dC}{dT}=4C^2 - FC, \quad \frac{d\Lambda}{dT}=4C\Lambda. \]

%All non-conjugated subalgebras of this algebra can
%be found in Table~II of the classical paper \cite{Pat-Wint}.

 Finally, two additional comments are in order. Firstly, for
$f(t)=At^{-1} $ (as in Case 3 of Theorem 2), a particular solution
for $c(t)$ is given by $c(t)= \frac{A-1}{4t},$ so that
\begin{equation}\label{4-16}
\lambda=\lambda_0 t^{A-1}, \quad  T=
\frac{\lambda^2_0}{2A-1}t^{2A-1}, \quad F(T)= \frac{A}{2A-1}T^{-1},
\quad A\not=\frac12. \ee Thus, two equations  with  different values
of $A$ in Case 3 (see Theorem 2) can be mapped into each other.

 There is also the special case $A=\frac12$ leading to
\begin{equation}\label{4-17} T= \lambda^2_0 \ln t, \quad F(T)=
\frac{1}{2\lambda^2_0}\equiv f_0. \ee This means that Case 3 with
$A=\frac12$ is reducible to Case 1 of Theorem 2 and  (\ref{4-17}) is
the corresponding  form-preserving  transformation. Notably, the Lie
algebra $A^b_{4,9}$ (see Subsection 4.1) is automaticaly reducible
to $A^0_{4,9}$ because $b=2A-1$, therefore an exceptional status of
$A=\frac12$ is reflected in $A^b_{4,9}$.

 Secondly, defining $G(T)=\frac{1}{F(T)}$, we have , on applying
the third equation of (\ref{4-11}), that
\[ \frac{dG}{dT}= \frac{dg}{dt}+ 8cg,   \ \frac{d^2G}{dT^2}= \frac{1}{\lambda^2}
\Big(\frac{d^2g}{dt^2} +8c\frac{dg}{dt} +32c^2g -8c\Big), \] from
which it follows that
\begin{equation}\label{4-18}
G\frac{d^2G}{dT^2} -\frac12 \Big(\frac{dG}{dT}\Big)^2 +\frac{dG}{dT}
= g\frac{d^2g}{dt^2} -\frac12 \Big(\frac{dg}{dt}\Big)^2
+\frac{dg}{dt}. \end{equation}
 Thus, a mapping between instances of
Case 4 is derived; moreover, by setting
$g(t)=\frac{1}{f(t)}=\frac{t}{A}$ in (\ref{4-18}), as in Cases 2 and
3 of Theorem 2, we obtain
\[ G\frac{d^2G}{dT^2} -\frac12 \Big(\frac{dG}{dT}\Big)^2 +\frac{dG}{dT}
= \frac{1}{A} - \frac{1}{2A^2}, \] which is equivalent to
(\ref{A-1}) with $B=\frac{1}{A} - \frac{1}{2A^2}$.

In consequence, there is a mapping between Cases 2, 3 and 4 of
Theorem 2; it is significant in this regard that $c(t)$ is any
solution obtained from (\ref{4-11}), so the transformation generates
an additional arbitrary constant. In the terms of the corresponding
Lie algebras, it means that the Lie algebra in Table \ref{tab4} with
correctly-specified values of $\beta$ and $\gamma$  should be
equivalent to those presented in Tables \ref{tab2}  and \ref{tab3}.
Moreover, one may claim that the four sets of equations listed in
Theorem 2 are inequivalent  under  equivalence transformations  of
the RD equation (\ref{1-0}); however, some of them are equivalent
under form-preserving transformations, as shown above.

%All non-conjugated subalgebras of this algebra can
%be found in Table~II of the classical paper \cite{Pat-Wint}.

%\newpage

\section{\bf  Conclusions} \label{sec:5}

The main results of this work can be summarised as follows.
It was identified that the Gompertz-type  equation (\ref{1-0}) involving diffusivity and an arbitrary time-dependent function $f(t)$ admits
 {\it remarkable Lie symmetries that form a $(1+\frac{1}{2}n(n+3))$-dimensional Lie algebra}.
 Moreover, we proved that there are exactly four  specific forms of  $f(t)$ when this algebra becomes to be wider. In other words, a complete LSC was derived. It is noteworthy that the principal algebra of the nonlinear equation (\ref{1-0}) is itself wider than that for other  RD equations involving nonlinear source/sink forms with   arbitrary functions.

We also note that some RD equations involving arbitrary time-dependent functions were studied earlier by means of the classical Lie method.  For example, the class of RD equations
\[u_t = u_{xx} -f(t)u^3 \]
was studied in \cite{van-at-ol-19} and
%only obvious
 all possible Lie  symmetries were found, in
particular, it was proved that the principal algebra admits
extensions only in the cases  $f(t)=f_0t^k$ and
$f(t)=f_0\emph{e}^{kt}, \ k \in \mathbb{R}$.  In the recent paper
\cite{ch-da-25}, an age-structured diffusive model for epidemic
modelling is studied, which is reducible to the form
\begin{equation}\label{5-1} u_{\tau}=\nabla^2 u +f(\tau)u - u^{\kappa} , \end{equation}
by introducing a variable $\tau$ that combines  time  and age. So,
the Lie symmetries  derived in  \cite{ch-da-25}(see Table 1
therein), can easily be transformed  to those for   (\ref{5-1}).
Although, there are  many specific forms of $f(\tau)$ leading to the
relevant extension of the principal algebra of (\ref{5-1}), the
latter, in contrast of (\ref{2-1}), consists only of the generators
$P_a$ and $ J_{ab}$. So, this again underlines a special status of
the Gompertz nonlinearity. Interestingly, the RD equation
(\ref{4-1}) with $f(t)=constant$ is reducible to the Burgers
equation with a time-variable coefficient by a chain of
substitutions \cite{bl-zhe-2013}.

In \cite{bas-la-zh-2001}, an attempt to solve the LSC problem for a very general  class of
quasilinear  evolution equations (in 1D case) was made. However, the authors used a purely
algebraic approach, in which they started from the {\it simplest realisations} of
 the 2-, 3-, 4- and 5-dimensional Lie algebras. As a result, the relevant evolution PDEs obtained
 in  \cite{bas-la-zh-2001} have been found  in cumbersome and awkward forms. It is very difficult
   to identify the class of equations (\ref{1-0}) and even any particular case in the tables
   presented in \cite{bas-la-zh-2001}. There are also several
   rigorous studies in which LSC is performed for classes of
   evolutionary 1D
   equations with space-variable  coefficients (see, e.g.,
   \cite{ga-wi-92},\cite{van-at-al-12}).

  The Gompertz-type  equation (\ref{1-0}) was further studied above in 1D case. All
inequivalent ans\"atze and reductions  were constructed and the ODEs
obtained were completely solved,  exact solutions of the PDE in
question  being found. A   non-Lie  ansatz reducing the equation in
question to a system of ODEs was identified as well. Interestingly,
 the exact solutions constructed using the non-Lie  ansatz coincide
with those derived by a Lie symmetry reduction. Highly non-trivial
examples of  mappings between different members of
 the class  of equations  (\ref{4-1}) are also  constructed and the relevant interpretation
  presented.

Finally, we note a sense in which the nonlinear non-autonomous
equation (\ref{1-0}) is exceptional in inheriting some of the
properties of the linear heat (diffusion)  equation. Limiting our
comments to the 1D space case, the linear heat equation possesses
two symmetries beyond the obvious ones of translation and scaling
invariance and of linear superposition. The first of these
generalises to arbitrary $f(t)$ on setting $c(t)=0$ (so that
$\lbd=1$ without loss of generality, see (\ref{4-13*})) in the form
-preserving transformation of Section 4.3, this providing a direct
derivation of ${\cal{G}}_1$ in (\ref{2-1}). Notably, ${\cal{G}}_1$
with $f(t)=0$ gives exactly the operator of Galilei transformations
for the  heat equation. The second, corresponding to projective
transformations (the Appel transformation), generalises only to the
special cases above, but is also reflected in the
form-transformations with $c(t)\not=0$. It can be also noted that
$\Pi$ (\ref{2-5}) with $f(t)=0$ and $g(t)= t^2$ produces the
operator of projective transformations for the linear heat equation.

\section{Acknowledgement}
%\acknowledgements
R.Ch. acknowledges that this research was partly funded by the Isaac Newton
Institute within the Rebuild Ukraine: Satellite scheme and the University of Nottingham.
 J.R.K.
gratefully acknowledges a Royal Society Leverhulme Trust Senior
Fellowship.  The authors are grateful to anonymous reviewers for
some helpful comments.

\section*{\bf  Appendix A}

The nonlinear ODE
(\ref{2-5a}) with $g(t)=\frac{1}{f(t)}\equiv f^{-1}$ takes the form
\begin{equation}\label{A-1}
gg'''+g''=0  \Leftrightarrow gg'' -\frac12 g'^2+g'=B,
\end{equation}
 where $B$ is an arbitrary constant that is  taken in the form $B=\frac12 (1-C_1^2)$
  in what follows.
Introducing the function $p(g)=g'$, one obtains the first-order ODE with separable variables
\[ gp\frac{dp}{dg}= \frac12(p- (1-C_1))(p- (1+C_1))\]
that can easily be integrated. As a result, we get
 \[ (1+C_1)\ln(p-(1+C_1))- (1-C_1)\ln(p-(1-C_1))=C_1(\ln g - \ln C_2) \]
 i.e.
\begin{equation}\label{A-2}
g = C_2 (p-(1+C_1))^{1+\frac{1}{C_1}}(p-(1-C_1))^{1-\frac{1}{C_1}},
\end{equation}
where $C_2$ is another arbitrary non-zero constant.
On the other hand, $gp\frac{dp}{dg}= gp'$, therefore
 \[ C_2 (p-(1+C_1))^{1+\frac{1}{C_1}}(p-(1-C_1))^{1-\frac{1}{C_1}} p'= \frac12(p- (1-C_1))(p- (1+C_1)). \]
 Integrating the above ODE, we arrive at the formula
 \begin{equation}\label{A-3}
 \int (p-(1+C_1))^{\frac{1}{C_1}}(p-(1-C_1))^{-\frac{1}{C_1}}dp = \frac{1}{2C_2}(t+C_3),
\end{equation}
where $C_3$ is an  arbitrary  constant.
The above quadrature can be expressed in the terms of known functions only for particular values of $C_1$.

Thus, the general solution of  the nonlinear ODE (\ref{A-1}) is
derived in the implicit form (\ref{A-2})--(\ref{A-3}).

\end{document}